\documentclass[11pt]{amsart}
\usepackage[margin=1in]{geometry}

\usepackage{amssymb}
\usepackage{amsthm}
\usepackage{amsmath}
\usepackage{mathrsfs}
\usepackage{amsbsy}
\usepackage[all]{xy}
\usepackage{bm}
\usepackage{hyperref}
\usepackage{tikz}
\usepackage{array}
\usepackage{float}
\usepackage{enumerate}
\usepackage{xcolor}
\usepackage{hhline}
\allowdisplaybreaks
\usepackage[noadjust]{cite}

\usepackage{caption}
\usepackage{subcaption}
\usepackage{tabu}
\usepackage{diagbox}
\usepackage{tikz}
\usepackage{bbm}
\usepackage{booktabs}

\DeclareFontFamily{U}{rcjhbltx}{}
\DeclareFontShape{U}{rcjhbltx}{m}{n}{<->rcjhbltx}{}
\DeclareSymbolFont{hebrewletters}{U}{rcjhbltx}{m}{n}

\let\aleph\relax\let\beth\relax

\DeclareMathSymbol{\aleph}{\mathord}{hebrewletters}{39}
\DeclareMathSymbol{\beth}{\mathord}{hebrewletters}{98}

\usepackage[noabbrev,capitalise]{cleveref}

\newenvironment{enumerate*}%
  {\begin{enumerate}[(I)]%
    \setlength{\itemsep}{10pt}%
    \setlength{\parskip}{0pt}}%
  {\end{enumerate}}

\newtheorem{theorem}{Theorem}[section]
\newtheorem{proposition}[theorem]{Proposition}
\newtheorem{corollary}[theorem]{Corollary}

\newtheorem{claim}[theorem]{Claim}

\theoremstyle{definition}

\title[Inequalities among higher-order difference sets]{Inequalities among higher-order difference sets, or,\\ remarks on a construction of Ruzsa}

\author{Noah Kravitz}
\address{St John's College, Oxford and Mathematical Institute, University of Oxford; St Giles', Oxford OX1 3JP, UK}
\email{noah.kravitz@maths.ox.ac.uk}

\begin{document}


\begin{abstract}
Extending a little-known construction of Ruzsa, we characterize the quadruples $(s,t,u,v)$ of nonnegative integers such that the inequality $|sA-tA| \leq |uA-vA|$ holds for all finite sets $A \subseteq \mathbb{Z}$.
\end{abstract}

\maketitle

\section{Introduction}
\subsection{Higher-order difference sets}
For subsets $A,B$ of an abelian group, define the sumset
$$A+B:=\{a+b: a\in A, ~b \in B\}.$$
For a nonnegative integer $r$, we denote the $r$-fold iterated sumset of $A$ by
$$rA:=\underbrace{A+\cdots+A}_r=\{a_1+\cdots+a_r: a_1, \ldots, a_r \in A\}$$
(with the convention $0A:=\{0\}$ for $A$ nonempty).

Inequalities among cardinalities of iterated sumsets play a central role in additive combinatorics.  One prominent example is the Pl\"unnecke--Ruzsa inequality (see \cite{petridis}). 
At the same time, there is interest in constructing pathological examples to show that certain natural-looking sumset inequalities cannot hold in general.  For instance, this perspective has motivated the now-abundant literature (see \cite{km} and the references therein) around more-sums-than-differences (MSTD) sets, namely, finite sets $A \subseteq \mathbb{Z}$ with $|2A|>|A-A|$.

In this paper, we study the higher-order difference set inequality
\begin{equation}\label{eq:main}
|sA-tA| \leq |uA-vA|,
\end{equation}
where $s,t,u,v$ are fixed nonnegative integers.  One source of motivation is that \eqref{eq:main} captures a vast generalization of the MSTD setup.  It is clear that \eqref{eq:main} always holds (in every imaginable setting) if $s \leq u$ and $t \leq v$, or if $s\leq v$ and $t \leq u$.  Our first main result shows that this sufficient condition is in fact necessary, in the setting where $A$ ranges over the finite subsets of the integers.

\begin{theorem}\label{thm:main}
Let $s,t,u,v$ be nonnegative integers.  Then the inequality $$|sA-tA| \leq |uA-vA|$$ holds for all finite sets $A \subseteq \mathbb{Z}$ if and only if $\min(s,t) \leq \min(u,v)$ and $\max(s,t) \leq \max(u,v)$.
\end{theorem}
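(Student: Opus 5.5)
The ``if'' direction is formal. Negation is a bijection from $sA-tA$ to $tA-sA$, so $|sA-tA|=|tA-sA|$ and we may permute $s,t$ and $u,v$ freely. So assume $s\ge t$ and $u\ge v$; the hypothesis then reads $s\le u$ and $t\le v$. Fix $a_0\in A$. Then
$$sA-tA+(u-s)a_0-(v-t)a_0\subseteq uA-vA,$$
and a translate has the same size as the original set. This gives $|sA-tA|\le|uA-vA|$.

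For the ``only if'' direction, again assume $s\ge t$ and $u\ge v$. Suppose the condition fails; then either $s>u$ or $t>v$. The inclusion above shows that $|hA-kA|$ is monotone in $h$ and in $k$, so it suffices to build sets for two extremal families of inequalities.
\begin{enumerate}
\item[(a)] If $s\ge u+1$, then $|sA-tA|\ge|(u+1)A|$ and $|uA-vA|\le|uA-uA|$. So it is enough to find, for each $u$, a finite $A\subseteq\mathbb{Z}$ with $|(u+1)A|>|uA-uA|$.
\item[(b)] If $t\ge v+1$, then $s\ge t\ge v+1$, so $|sA-tA|\ge|(v+1)A-(v+1)A|$. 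We need $|(v+1)A-(v+1)A|>|uA-vA|$, where $u\ge v$ is arbitrary. So for each $v$ and each $N$ we want a finite $A$ with $|(v+1)A-(v+1)A|>|NA-vA|$.
\end{enumerate}

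In both cases I would first build $A$ inside $\mathbb{Z}^d$ (or $\mathbb{Z}\times$ a finite group). I would then transfer it to $\mathbb{Z}$ by a linear map $\phi(x_1,\dots,x_d)=\sum_i x_iM^{i-1}$ with $M$ huge. This map is a Freiman isomorphism of order $h+k$ on $A$ for any fixed $h,k$, so every cardinality $|hA-kA|$ with $h+k$ bounded is preserved. Finite-group constructions can be realized the same way by taking a box of representatives.

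For (a), the plan is an MSTD-type construction in Ruzsa's style: a ``fringe'' argument. Let $A$ consist of most of a box or long progression $P$, together with carefully chosen boundary elements $F$. The choice should ensure that $(u+1)A$ fills out all of $(u+1)P$ including its corners. Meanwhile $uA-uA$ should omit many points near the boundary of $uP-uP$, because the fringe is asymmetric: it is built to support sums but not differences. One then compares the counts of the boundary layers.

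Case (b) is where I expect the main obstacle, because $N$ may be arbitrarily large compared with $v$. The two sides must be made to grow at different \emph{rates}, not just differ by a fringe. My first attempt is a two-level construction $A=\{(0,x):x\in P\}\cup\{(1,y_i):1\le i\le d\}\subseteq\mathbb{Z}\times\mathbb{Z}^m$. Here $P$ is a long box that absorbs positive sums: the points $y_i$ lie within $P$'s range, so every layer of $NA$ collapses to roughly an interval-like set of size $O_N(|P|)$. The $y_i$, by contrast, should be spread in a dissociated way relative to $P$ in the directions exploited by differences. Then the layer-$0$ part of $(v+1)A-(v+1)A$ contains about $d^{2(v+1)}$ distinct combinations $\sum y_i-\sum y_j$ that $P$ fails to absorb, while $NA-vA$ has only about $d^{2v}$ such ``unabsorbed'' patterns, times factors depending on $N$ and $|P|$. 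The key technical step is choosing the geometry of $P$ relative to the $y_i$, probably via a lattice quotient or a dilation, so that positive combinations are absorbed but the extra $(v+1)$st difference is not. After that, one lets $d\to\infty$ with $N,v$ fixed and applies the Freiman transfer to land in $\mathbb{Z}$.
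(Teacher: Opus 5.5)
Your ``if'' direction and your reduction of the ``only if'' direction to the two extremal families (a) $|(u+1)A|>|uA-uA|$ and (b) $|(v+1)A-(v+1)A|>|NA-vA|$ are correct. The gap is that no construction is actually supplied, and both sketches fail for structural reasons.

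In (a), a fringe perturbation of a progression or box cannot work once $u\ge 2$. If $A$ is $[0,n]$ plus $O(1)$ nearby points, then $|(u+1)A|\le(u+1)n+O(1)$ while $|uA-uA|\ge 2un+1$. For a box $P\subseteq\mathbb{Z}^d$ one has $|(u+1)P|\approx(u+1)^d|P|$ against $|uP-uP|\approx(2u)^d|P|$. Fringe arguments work for MSTD sets only because at $u=1$ the leading terms of $|2A|$ and $|A-A|$ tie. Here the left side must win at leading order, not in a boundary layer.

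In (b), the absorption mechanism defeats itself. Since the $y_i$ lie in the range of $P$, you have $(v+1)A-(v+1)A\subseteq[-(v+1),v+1]\times\bigl((v+1)P-(v+1)P\bigr)$. Hence $|(v+1)A-(v+1)A|\le(2v+3)|(v+1)P-(v+1)P|$, no matter how large $d$ is or how dissociated the $y_i$ are. Meanwhile $NA-vA\supseteq\{0\}\times(NP-vP)$. For a box, $|NP-vP|\approx(N+v)^m|P|$ exceeds $(2v+3)(2v+2)^m|P|$ once $N$ is large, which is precisely the regime (b) must cover. Your count ``$d^{2v+2}$ versus $d^{2v}$ times an $N$-dependent factor'' hides that this factor already exceeds the largest possible size of the left side. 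Replacing $P$ by a finite group $G$ does not help either: $|(v+1)A-(v+1)A|\le(2v+3)|G|$, while at least $N+v$ layers of $NA-vA$ contain a full copy of $G$. In general the cap on the left side is determined by $P$ alone, so the $y_i$ cannot create the gap; $P$ would already have to display, up to a factor $2v+3$, the anomaly that (b) asks for. The unspecified ``lattice quotient or dilation'' is therefore not a technical step but the entire content of the theorem.

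The missing idea, which the paper supplies, is to abandon near-convex sets and work in $\mathbb{Z}/q\mathbb{Z}$ with $q=\prod_t p_t$ a product of many large primes. View the left side as $\lambda_1\cdot A+\cdots+\lambda_\ell\cdot A$ with $\lambda_i=\pm1$, and set $A=\bigcup_x\{\lambda_1^{-1}\varphi^{(1)}(x),\ldots,\lambda_\ell^{-1}\varphi^{(\ell)}(x)\}$ with $\varphi^{(\ell)}(x)=x-\sum_{i<\ell}\varphi^{(i)}(x)$. Then the left-hand sumset is all of $\mathbb{Z}/q\mathbb{Z}$ for free. One then chooses the CRT coordinate functions $\varphi^{(i)}_t$ so that every element of the right-hand sumset has some vanishing coordinate, which bounds its size by $q\sum_t p_t^{-1}\le\varepsilon q$.

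The obstruction is that one point $z$ may occur in different roles in different summands. This is overcome by induction on the number of distinct summands (Ruzsa's ``level''). Earlier coordinates act as an index recording which summands coincide, so the coordinate functions can be defined independently on each coincidence class. Non-domination (for $\pm1$ dilates, exactly the failure of the condition in the theorem) guarantees that on each class the bare $z$ term can be cancelled by a linear choice of a single auxiliary function $\psi^{(i_*)}$. This handles (a) and (b), indeed all non-dominated cases, at once.

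Finally, lifting to $\mathbb{Z}$ by taking the preimage in $[1,q]$ is not a Freiman isomorphism, contrary to your remark about boxes of representatives. It loses only a factor $u+v$ on the right, which is harmless against a density gap of $1$ versus $\varepsilon$.
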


The hard direction of the proof boils down to producing examples of finite sets $A \subseteq \mathbb{Z}$ satisfying \emph{a priori} surprising inequalities such as
\begin{equation}\label{eq:examples}
|10A-10A|>|9A-100000A| \quad \text{and} \quad |100A|>|99A-99A|.
\end{equation}
The existence of such examples is striking because one generally expects iterated sumsets to grow with the number of summands.  For comparison, the MSTD set problem concerns the relative sizes of $2A$ and $A-A$, which are both $2$-fold sumsets, whereas in each example in \eqref{eq:examples}, the sumset on the left-hand side has much lower order than the sumset on the right-hand side.  In the second example, the set $50A$ must be an MSTD set, in fact in a rather strong sense since $50A+50A=100A$ is not only larger than $50A-50A$ but also larger than $99A-99A=(50A-50A)+(49A-49A)$.

We remark that once the inequality \eqref{eq:main} fails for some finite set $A \subseteq \mathbb{Z}$, it necessarily fails by a wide margin for other $A$'s, since a single counterexample can be amplified as follows.  Suppose that $B \subseteq \mathbb{Z}$ satisfies $|sB-tB|>|uB-vB|$.  Then set $\alpha:=\log(|sB-tB|)/\log(|uB-vB|)>1$, and notice that the $n$-fold Cartesian product of $B$ satisfies
$$|sB^n-tB^n|=|sB-tB|^n=|uB-vB|^{\alpha n}=|uB^n-vB^n|^{\alpha}.$$
Taking $A:=B^n$ with $n$ large (and applying a suitable Freiman homomorphism to transfer from $\mathbb{Z}^n$ to $\mathbb{Z}$), we see that \eqref{eq:main} even fails polynomially with an exponent $1+\Omega_{s,t,u,v}(1)$.

This observation might make one wonder if \eqref{eq:main} can fail only for ``trivial reasons'' due to numerical coincidences for small sets $A$.  To the contrary, our proof of Theorem~\ref{thm:main} will, without any amplification, naturally produce examples with $|sA-tA|/|uA-vA|$ arbitrarily large.

\subsection{Sums of dilates}
The arguments behind Theorem~\ref{thm:main} naturally output a more general statement about inequalities among sums of dilates.  For a nonzero integer $\lambda$, define the dilate
$$\lambda \cdot A:=\{\lambda a: a \in A\}$$
(not to be confused with the $\lambda$-fold sumset $\lambda A$).  A natural generalization of the problem resolved by Theorem~\ref{thm:main} is determining the pairs of multisets of nonzero integers $\{\lambda_1, \ldots, \lambda_\ell\}, \{\mu_1, \ldots, \mu_m\}$ such that the inequality
\begin{equation}\label{eq:dilates}
|\lambda_1 \cdot A+\cdots+\lambda_\ell \cdot A| \leq |\mu_1 \cdot A+\cdots+\mu_m \cdot A|
\end{equation}
holds for all finite sets $A \subseteq \mathbb{Z}$.  It is clear that \eqref{eq:dilates} always holds if some scalar multiple of $\{\mu_1, \ldots, \mu_m\}$ contains $\{\lambda_1, \ldots, \lambda_\ell\}$ (as multisets).  This is, however, not the most general way in which \eqref{eq:dilates} can trivially hold: Say that $\{\mu_1, \ldots, \mu_m\}$ \emph{dominates} $\{\lambda_1, \ldots, \lambda_\ell\}$ if there are a nonzero real number $r$ and disjoint subsets $S_1, \ldots, S_\ell \subseteq [m]$ such that $$r\lambda_j=\sum_{h \in S_j} \mu_h$$ for all $j \in [\ell]$.
For example $\{2,3,5\}$ dominates $\{2,8\}$ (with $r=1$) and $\{-1,-4\}$ (with $r=-2$).  If $\{\mu_1, \ldots, \mu_m\}$ dominates $\{\lambda_1, \ldots, \lambda_\ell\}$ in this way, then $\mu_1 \cdot A+\cdots+\mu_m \cdot A$ contains a translate of
$$\sum_{h \in S_1} (\mu_h \cdot A)+\cdots+\sum_{h \in S_\ell} (\mu_h \cdot A) \supseteq r\lambda_1 \cdot A+\cdots+r\lambda_\ell \cdot A=r \cdot (\lambda_1 \cdot A+\cdots+\lambda_\ell \cdot A),$$
whence $|\lambda_1 \cdot A+\cdots+\lambda_\ell \cdot A| \leq |\mu_1 \cdot A+\cdots+\mu_m \cdot A|$.  Our most general result says that this is the only way for \eqref{eq:dilates} to hold.

\begin{theorem}\label{thm:dilates}
Let $\lambda_1, \ldots, \lambda_\ell, \mu_1, \ldots, \mu_m$ be nonzero integers.  Then the inequality
$$|\lambda_1 \cdot A+\cdots+\lambda_\ell \cdot A| \leq |\mu_1 \cdot A+\cdots+\mu_m \cdot A|$$
holds for all finite sets $A \subseteq \mathbb{Z}$ if and only if $\{\mu_1, \ldots, \mu_m\}$ dominates $\{\lambda_1, \ldots, \lambda_\ell\}$.
\end{theorem}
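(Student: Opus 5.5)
The plan is to prove only the nontrivial direction, since the computation before the statement already shows that domination suffices. Suppose $\{\mu_1,\dots,\mu_m\}$ does not dominate $\{\lambda_1,\dots,\lambda_\ell\}$. I would build a finite set $A$ inside some lattice $\mathbb{Z}^d$, compute both sums of dilates up to constant factors, and check that $|\lambda_1\cdot A+\cdots+\lambda_\ell\cdot A|$ is larger by a growing factor. A Freiman isomorphism of high enough order, namely a generic linear map $\mathbb{Z}^d\to\mathbb{Z}$, then moves the example to $\mathbb{Z}$ without changing either cardinality.

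\textbf{The construction.} Extending Ruzsa's idea, I would take $A=\{0\}\cup A_1\cup\cdots\cup A_k$. The pieces are:
\begin{itemize}
\item a long arithmetic progression $P=\{0,1,\dots,N\}\cdot e_0$;
\item several ``generic'' blocks, each consisting of about $K$ points in general position in fresh coordinates.
\end{itemize}
The parameters $N$ and $K$ are tied together as powers of one large parameter, with exponents to be tuned.

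An element of $\sum_j \lambda_j\cdot A$ arises from an assignment $f\colon[\ell]\to\{\text{pieces}\}$. Its contribution is the sum over pieces $i$ of $\sum_{j\in f^{-1}(i)}\lambda_j\cdot A_i$.

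\textbf{Size of each contribution.} By genericity, a sum of dilates of a generic block is as large as possible: about $K^{|f^{-1}(i)|}$, after identifying terms with equal coefficients. Dilate sums of the progression behave differently: they depend only on the \emph{sum} $\sum_{j\in f^{-1}(i)}\lambda_j$ and have size linear in $N$. This is how numerical coincidences among partial sums of the $\lambda$'s and $\mu$'s enter.

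The goal of this step is a formula of the shape
$$\Bigl|\sum_j\lambda_j\cdot A\Bigr|\asymp\max_{f}\prod_i(\text{size of piece-}i\text{ contribution}),$$
and the analogous formula for the $\mu$'s. Each side then becomes a maximum of monomials in $N$ and $K$, indexed by combinatorial data: a partition of $[\ell]$, respectively $[m]$, into groups, together with labels recording which group sums agree up to a common scalar.

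\textbf{Comparing the two sides.} Once both sides are maxima of monomials, the inequality $\bigl|\sum\lambda_j\cdot A\bigr|>\bigl|\sum\mu_h\cdot A\bigr|$ for large parameters becomes an inequality between exponents. I would show that non-domination is exactly what allows a choice of exponents, and a choice of which dilate ratios are realized inside the progression pieces, for which the $\lambda$-side exponent strictly wins. The relevant fact is the following. If for every candidate scalar $r$ there are no disjoint $S_1,\dots,S_\ell$ with $r\lambda_j=\sum_{h\in S_j}\mu_h$, then the $\mu$-side cannot reproduce every independent direction that the $\lambda$-side generates. So the $\lambda$-side has some monomial with a strictly larger exponent. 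I would formalize this as a finite linear-programming or duality statement, choosing the weights of the pieces so that every $\mu$-monomial is beaten. The real scalar $r$ is handled by observing that only finitely many ratios $\sum_{h\in S}\mu_h/\lambda_j$ can occur, so only finitely many $r$ need to be excluded.

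\textbf{Main obstacle.} I expect the hardest step to be this last translation: matching the combinatorial definition of domination exactly with the failure of the exponent comparison. In particular, one must rule out that the $\mu$-side gains from many partial coincidences at once, with different groups using different effective scalars $r$. To prevent this, my first attempt would use several progression pieces at incommensurable scales, one per candidate ratio $r$. Then the $\mu$-side can collapse onto the $\lambda$-side only if a single $r$ works simultaneously for all $j$, which is precisely the definition of domination.
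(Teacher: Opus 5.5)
There is a genuine gap, and it sits exactly at the step you flag as the main obstacle: the statement you want to formalize there is false for the family of sets you propose. For your pieces, the dilates affect only constants. A dilate of a progression is a progression of the same length, so $\sum_{j\in f^{-1}(i)}\lambda_j\cdot P$ has size $\Theta(N)$ for every nonempty collection of coefficients. It does not depend only on $\sum_{j\in f^{-1}(i)}\lambda_j$: for example, $P-P$ has $2N+1$ elements although the coefficient sum is $0$. Likewise, a sum of $r$ dilates of a generic block has size $\asymp K^{r}$ whatever the nonzero coefficients are; equal coefficients only change the constant, e.g.\ $|A_i+A_i|=\binom{K+1}{2}$. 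Hence the monomial attached to an assignment $f$ depends only on how many summands land in each piece, never on the values of the $\lambda_j$ or $\mu_h$. Since $0\in A$, for $\ell\le m$ the $\mu$-side contains $\mu_1\cdot A+\cdots+\mu_\ell\cdot A$, whose leading monomial coincides with that of the $\lambda$-side. So the $\lambda$-side exponent can never strictly win. Yet non-dominated pairs with $\ell\le m$ are central to the theorem: $\{1,1\}$ versus $\{1,-1\}$ (MSTD sets), Ruzsa's $|A-A|>|uA|$, or $|10A-10A|>|9A-100000A|$. In the MSTD case even the constants go the wrong way. For pieces in independent coordinates, each pair $i\neq j$ contributes $A_i+A_j$ once to $2A$ but both $A_i-A_j$ and $A_j-A_i$ to $A-A$, while $|2A_i|\le|A_i-A_i|$ for a progression or a generic block; so one checks $|2A|\le|A-A|$. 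Progressions at several scales are just more pieces of the same kind. No tuning of exponents and no linear-programming duality can close the argument; your Freiman transfer step is fine, but there is no example to transfer.

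What is missing is a construction in which the specific values $\lambda_j,\mu_h$ force collisions. (This algebraic idea, not a union of generic pieces, is the Ruzsa construction the paper extends.) The paper works in $\mathbb{Z}/q\mathbb{Z}$ with $q=\prod_t p_t$ a product of many large primes. It sets $A=\bigcup_x\{\lambda_1^{-1}\varphi^{(1)}(x),\ldots,\lambda_\ell^{-1}\varphi^{(\ell)}(x)\}$ with $\varphi^{(\ell)}(x)=x-\varphi^{(1)}(x)-\cdots-\varphi^{(\ell-1)}(x)$, so that $\lambda_1\cdot A+\cdots+\lambda_\ell\cdot A=\mathbb{Z}/q\mathbb{Z}$ automatically. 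It then chooses the coordinate functions $\varphi^{(i)}_t$ so that every element of $\mu_1\cdot A+\cdots+\mu_m\cdot A$ has a vanishing coordinate, which gives size at most $q\sum_t p_t^{-1}\le\varepsilon q$. Non-domination enters through a local lemma. For any $P\subseteq[m]$ and labels $i_h\in[\ell]$, there are linear $\psi^{(i)}$, with $\psi^{(\ell)}(z)=z-\sum_{i<\ell}\psi^{(i)}(z)$, making $\sum_{h\in P}\mu_h\lambda_{i_h}^{-1}\psi^{(i_h)}(z)$ identically zero. Concretely, if $r=\lambda_\ell^{-1}\sum_{h\in P,\,i_h=\ell}\mu_h\neq 0$, non-domination supplies $i_*\neq\ell$ whose coefficient $s$ is nonzero, and one takes $\psi^{(i_*)}(z)=-(r/s)z$. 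A single coordinate function cannot treat different summation variables differently. So the paper inducts on the number of distinct values among $x^{(1)},\ldots,x^{(m)}$, using previously built coordinates as labels telling each new coordinate function which block of the coincidence partition its argument lies in. The example is then lifted to $\mathbb{Z}$ by taking the preimage in $[1,q]$, which costs only a factor $|\mu_1|+\cdots+|\mu_m|$ on the $\mu$-side.
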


Theorem~\ref{thm:main} corresponds to the special case of Theorem~\ref{thm:dilates} where $\lambda_i, \mu_i \in \{-1,1\}$ for all $i$.

\subsection{Relation to prior work}

The first hint of the possibility of achieving \eqref{eq:examples} came from an unpublished (and somewhat obscure) 2016 note of Ruzsa~\cite{ruzsa} in which he constructed sets with many more differences than multiple sums.  More precisely, he showed that for every positive integer $u$, there is a finite set $A \subseteq \mathbb{Z}$ with
$$|A-A|>|uA|.$$
This corresponds to the special case of Theorem~\ref{thm:main} where $s=t=1$ and $v=0$.  Our proof of Theorem~\ref{thm:main} is based on a careful analysis and generalization of Ruzsa's quite creative construction.  Our main contribution is recognizing the appropriate level of generality at which his approach works.  Ruzsa says that he took inspiration from an older paper of Haight~\cite{hai}.  See \cite{mil, nat} for other subsequent work based on Ruzsa's construction.

One other special case of Theorem~\ref{thm:main} was previously known: The ``homogeneous'' instance $s+t=u+v$ recovers a result of Iyer, Lazarev, Miller, and Zhang~\cite{ilmz} on what they term ``generalized MSTD sets''.\footnote{See \cite{km} for corrections of several mistakes in \cite{ilmz}.}  This work used fairly ad hoc methods which do not play a role in the present paper.

We remark that the $(s,t,u,v)=(2,0,1,1)$ instance of Theorem~\ref{thm:main} provides a new source of MSTD sets.  Our construction is more ``robust'' than many existing constructions in the literature, in the sense that it naturally provides amplification-free examples with $2A$ much larger than $|A-A|$.

For other work on sums of dilates with a similar general flavor, see the references in \cite{pon,hp}.

\subsection{Open problems}
We round out this introduction with a few open problems.

\subsubsection{Growth rates}
As we suggested above, the natural scaling for the problem of comparing $|sA-tA|$ and $|uA-vA|$ is captured by the quantity
$$\alpha(s,t,u,v):=\lim_{n \to \infty} \sup_{|A|=n} \frac{\log|sA-tA|}{\log |uA-vA|}.$$ The limit exists by Fekete's Lemma (since sumset sizes are multiplicative with respect to the Cartesian product), and it trivially lies between $1$ and $s+t$.  In the 1970's, Freiman and Pigaev~\cite{fp} studied the values of the constants $\alpha(2,0,1,1)$ and $\alpha(1,1,2,0)$, which pertain to the relative sizes of $|2A|$ and $|A-A|$. See \cite{ruzsa2,hry} for follow-up work.  Here we raise the more general problem of computing or estimating $\alpha(s,t,u,v)$ for various quadruples $(s,t,u,v)$.  Notice that Theorem~\ref{thm:main} characterizes the quadruples with $\alpha(s,t,u,v)=1$.

\subsubsection{Arbitrary dilates}
Theorem~\ref{thm:dilates} is stated for integer dilates of finite sets of integers.  The same result of course holds for rational dilates, since the statement is invariant under scaling all of the dilates by a fixed nonzero real.  One can further ask about arbitrary complex  dilates of finite sets of complex numbers.  This setting is not covered by our proof of Theorem~\ref{thm:dilates}, and we leave the corresponding characterization as an open problem. 

\subsubsection{Three-legged sumset races}
Nathanson~\cite{nat2} recently introduced a family of problems about what have come to be termed ``sumset races''.  The most basic version asks for a pair of finite sets $A,B \subseteq \mathbb{Z}$ such that $|hA|-|hB|$ has prescribed sign for all $1 \leq h \leq H$.  Such pairs of sets do always exist, and indeed somewhat more can be achieved (see~\cite{kra,fkz,pr}).  Iyer, Lazarev, Miller, and Zhang~\cite{ilmz} have studied a related problem about higher-order difference sets.  They showed that for any fixed positive integer $H$, there is a finite set $A \subseteq \mathbb{Z}$ such that the quantities $$|HA|, ~|(H-1)A-A|, ~\ldots, ~|\lceil H/2 \rceil A-\lfloor H/2 \rfloor A|$$ have prescribed relative order.

We propose extending these problems to mixed sumsets of varying orders, in what might  be described as ``three-legged sumset races''.  For instance, suppose that $(s_1, t_1), \ldots, (s_{H^2},t_{H^2})$ is an ordering of $[0,H-1]^2$ in which $(s,t)$ always occurs before $(s+1,t)$ and $(s,t+1)$.  Must there exist finite sets $A,B \subseteq \mathbb{Z}$ such that
$$|s_1A+t_1B|<|s_2A+t_2B|<\cdots<|s_{H^2}A+t_{H^2}B|?$$
For a version with difference sets, consider the ordered pairs $(s,t)$ with $0 \leq t \leq s \leq H-1$, and suppose that $(s_1, t_1), \ldots, (s_{H(H+1)/2},t_{H(H+1)/2})$ is an ordering of these pairs in which $(s,t)$ always occurs before $(s+1,t)$ and (if $s>t$) before $(s,t+1)$.
Must there exist a finite set $A \subseteq \mathbb{Z}$ such that
$$|s_1A-t_1A|<|s_2A-t_2A|<\cdots<|s_{H(H+1)/2}A-t_{H(H+1)/2}A|?$$
Theorem~\ref{thm:main} tells us that no single one of these inequalities precludes the existence of such a set $A$.  It would be quite interesting to understand if new obstructions can arise from combinations of several inequalities.

\subsection{Organization of the paper}
In Section~\ref{sec:mstd} we prove Theorem~\ref{thm:main} in the special case corresponding to MSTD sets.  We highlight this case both because it is of particular interest and because it illustrates the main ideas in a simpler (and less notation-intensive) setting.  In Section~\ref{sec:dilates} we prove Theorem~\ref{thm:dilates} in full generality and show how to deduce Theorem~\ref{thm:main} from it.

\section{A robust MSTD construction} \label{sec:mstd}

The goal of this section is to prove the following result about MSTD sets in finite cyclic groups.

\begin{proposition}\label{prop:mstd}
For every $\varepsilon>0$, there are $q \in \mathbb{N}$ and $A \subseteq \mathbb{Z}/q\mathbb{Z}$ such that $$2A=\mathbb{Z}/q\mathbb{Z} \quad \text{and} \quad |A-A| \leq \varepsilon q.$$
\end{proposition}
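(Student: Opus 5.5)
The plan is a product construction: find a single cyclic ``base example'' in which $2P$ is everything but $P-P$ misses a positive proportion, and then take Cartesian powers, transferred to a cyclic group by the Chinese remainder theorem. Write $N(P):=|P-P|/n$ for $P\subseteq\mathbb{Z}/n\mathbb{Z}$.

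\textbf{Step 1 (base example; this is the step I expect to be the obstacle).} For every sufficiently large $n$ in some infinite family, exhibit $P_n\subseteq\mathbb{Z}/n\mathbb{Z}$ with
$$P_n+P_n=\mathbb{Z}/n\mathbb{Z} \quad\text{and}\quad P_n-P_n\neq \mathbb{Z}/n\mathbb{Z}, \qquad\text{indeed}\quad N(P_n)\le 1-\eta$$
for a fixed $\eta>0$. Missing a difference $d$ means exactly $P\cap(P+d)=\emptyset$. My first attempt is a continuous model on $\mathbb{T}=\mathbb{R}/\mathbb{Z}$: a finite union $Q$ of closed arcs such that $Q+Q$ covers $\mathbb{T}$ with room to spare, while $Q\cap(Q+d)=\emptyset$ for all $d$ in an open set of measure at least $\eta$. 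I would then discretise by taking $P_n=\{x: x/n\in Q\}$, where the slack absorbs rounding errors. Two-arc sets seem to fail at the boundary, and I checked some of them: with $Q=[0,a]\cup[b,b+e]$, the covering constraints $2a\ge b$, $e\ge b-a$ and $2b+2e\ge 1$ appear to force $Q-Q=\mathbb{T}$. So I would try three or more arcs of unequal lengths, or sets with a Haight-type structure as Ruzsa did, possibly helped by a computer search. If no one-dimensional model works, the fallback is to let the sumset covering use several coordinates jointly, as in Ruzsa's construction, rather than coordinatewise.

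\textbf{Step 2 (product).} Choose pairwise coprime $n_1,\dots,n_k$ from the family in Step 1, put $q=n_1\cdots n_k$, and identify $\mathbb{Z}/q\mathbb{Z}\cong\prod_i\mathbb{Z}/n_i\mathbb{Z}$. Set $A=P_{n_1}\times\cdots\times P_{n_k}$. Because sums and differences act coordinatewise,
$$2A=\prod_i (P_{n_i}+P_{n_i})=\mathbb{Z}/q\mathbb{Z} \quad\text{and}\quad \frac{|A-A|}{q}=\prod_i N(P_{n_i})\le(1-\eta)^k.$$

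\textbf{Step 3 (conclusion).} Take $k$ large enough that $(1-\eta)^k\le\varepsilon$; this gives the proposition. This is also the ``robust'' form the section wants: $|2A|/|A-A|\ge(1-\eta)^{-k}$ can be made arbitrarily large without any separate amplification. Everything after Step 1 is routine, so essentially all of the difficulty lies in producing a uniform $\eta>0$ there.
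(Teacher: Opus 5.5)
\textbf{Step 1 is a genuine gap.} Your Steps 2 and 3 are correct. For a product set $A=P_{n_1}\times\cdots\times P_{n_k}$ in $\prod_i\mathbb{Z}/n_i\mathbb{Z}\cong\mathbb{Z}/q\mathbb{Z}$ one has $2A=\prod_i(2P_{n_i})$ and $|A-A|=\prod_i|P_{n_i}-P_{n_i}|$, so the tensor-power amplification goes through.

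However, this only reduces the proposition for small $\varepsilon$ to the proposition for one fixed $\varepsilon=1-\eta<1$. It also needs that statement at infinitely many pairwise coprime moduli. Step 1 is exactly that statement, and you do not prove it. Ruling out two arcs and listing things to try (more arcs, Haight-type sets, a computer search, falling back on Ruzsa) is not a construction. Even the qualitative fact that some $P\subseteq\mathbb{Z}/n\mathbb{Z}$ has $P+P=\mathbb{Z}/n\mathbb{Z}$ but $P-P\neq\mathbb{Z}/n\mathbb{Z}$ is nontrivial, and all the content lies there. As written, the proposal is a reduction, not a proof.

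\textbf{How the paper proceeds.} The paper needs no base example and no powering. It takes $A=\bigcup_x\{\varphi(x),x-\varphi(x)\}$, so $2A=\mathbb{Z}/q\mathbb{Z}$ is automatic. It then chooses the CRT coordinates of $\varphi$ so that every difference has a vanishing coordinate:
\begin{itemize}
\item $\varphi_0(x)=x_0/2$ kills the $\mathbb{Z}/p_0\mathbb{Z}$ coordinate of all four difference types $E_i(x,y)$ when $x_0=y_0$.
\item For each triple $(a,b,i)$ with $a\neq b$, a dedicated prime $p_t$ is used, with $\varphi_t(z)\in\{z_t,0\}$ chosen according to $i$ and to whether $z_0=a$ or $z_0=b$. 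This kills the $t$-th coordinate of $E_i(x,y)$ whenever $(x_0,y_0)=(a,b)$.
\end{itemize}
Hence $|A-A|\le q\sum_t p_t^{-1}$, which gives arbitrarily small $\varepsilon$ directly.

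\textbf{Repairing your route.} Your approach can be completed, because one example suffices. Suppose $P\subseteq\mathbb{Z}/n_0\mathbb{Z}$ has $P+P=\mathbb{Z}/n_0\mathbb{Z}$ and $d_0\notin P-P$, and take $0<\delta<\frac12$. Then $Q=n_0^{-1}(P+[0,\frac12+\delta])$ is exactly the torus model you want:
\begin{itemize}
\item $Q+Q=\mathbb{T}$ with room to spare;
\item $Q-Q$ misses an arc of length $(1-2\delta)/n_0$ around $d_0/n_0$.
\end{itemize}
Re-discretizing $Q$ at large primes then gives your uniform $\eta$. Such a $P$ does exist. Take $P=\{x+\epsilon_x m: 0\le x<m\}\subseteq\mathbb{Z}/2m\mathbb{Z}$; this set never has $m$ as a difference. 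For random $\epsilon_x\in\{0,1\}$ and large $m$, a union bound over the disjoint pairs $\{x,r-x\}$ shows $P+P=\mathbb{Z}/2m\mathbb{Z}$ with positive probability. An argument of this kind is precisely what your Step 1 is missing.
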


Lifting this construction from $\mathbb{Z}/q\mathbb{Z}$ to the integers is routine.

\begin{corollary}\label{cor:mstd}
For every $K>0$, there is a finite set $A \subseteq \mathbb{Z}$ such that
$$|2A|\geq K|A-A|.$$
\end{corollary}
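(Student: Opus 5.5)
The plan is to apply Proposition~\ref{prop:mstd} with a suitably small $\varepsilon$ and lift the resulting set to the integers by taking its canonical representatives. Concretely, given $K>0$, set $\varepsilon:=1/(2K)$ (for $K\le 1/2$ any nonempty $A$ already works, since $|2A|\ge 1$ forces nothing and we may as well use the same argument). Proposition~\ref{prop:mstd} gives $q\in\mathbb{N}$ and $A'\subseteq \mathbb{Z}/q\mathbb{Z}$ with $2A'=\mathbb{Z}/q\mathbb{Z}$ and $|A'-A'|\le \varepsilon q$. Let $A\subseteq\{0,1,\ldots,q-1\}\subseteq\mathbb{Z}$ be the set of least nonnegative representatives of the elements of $A'$, and let $\pi:\mathbb{Z}\to\mathbb{Z}/q\mathbb{Z}$ be reduction modulo $q$.

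First, the lower bound for $2A$. Since $\pi$ is a homomorphism, $\pi(2A)=2\pi(A)=2A'=\mathbb{Z}/q\mathbb{Z}$. A map onto a set of size $q$ has domain of size at least $q$, so $|2A|\ge q$.

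Second, the upper bound for $A-A$. Again $\pi(A-A)=A'-A'$, so every element of $A-A$ reduces into $A'-A'$. Moreover $A-A\subseteq[-(q-1),q-1]$, an interval of length $2q-1$, and each residue class modulo $q$ meets this interval in at most two integers. Hence $|A-A|\le 2|A'-A'|\le 2\varepsilon q$.

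Combining the two bounds gives $|2A|\ge q\ge |A-A|/(2\varepsilon)=K|A-A|$, as required. There is no real obstacle here. The only point needing a moment's care is the factor of $2$ lost because the difference set of the lift lives in an interval of length about $2q$ rather than $q$. This factor is absorbed by the choice $\varepsilon=1/(2K)$, which is allowed because Proposition~\ref{prop:mstd} holds for every $\varepsilon>0$.
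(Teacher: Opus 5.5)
Your proof is correct and is essentially the paper's argument: apply Proposition~\ref{prop:mstd} with $\varepsilon=1/(2K)$, lift to a set of representatives in an interval of length $q$, and absorb the factor of $2$ lost on the difference set.

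Your parenthetical aside about $K\le 1/2$ is unnecessary, since the main argument works for every $K>0$. It is also false as stated: there are finite sets with $|A-A|>2|2A|$, for instance Cartesian powers of $\{0,1,3\}$ mapped into $\mathbb{Z}$ by a Freiman isomorphism. Nothing in the proof depends on it.
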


\begin{proof}
Proposition~\ref{prop:mstd} with $\varepsilon:=1/(2K)$ produces a set $B \subseteq \mathbb{Z}/q\mathbb{Z}$ (for some $q \in \mathbb{N}$) such that
$$2B=\mathbb{Z}/q\mathbb{Z} \quad \text{and} \quad |B-B| \leq q/(2K).$$
Let $A \subseteq [1,q]$ be the preimage of $B$ under the canonical projection map.  Then
$$|2A| \geq |2B|=q \quad \text{and} \quad |A-A| \leq 2|B-B| \leq q/K,$$
which gives the desired inequality $|2A| \geq K|A-A|$.
\end{proof}

To motivate the proof strategy of Proposition~\ref{prop:mstd}, notice that $2A$ contains the element $x \in \mathbb{Z}/q\mathbb{Z}$ if and only if there is some $\varphi \in \mathbb{Z}/q\mathbb{Z}$ such that $\varphi, x-\varphi \in A$.  It follows that $2A=\mathbb{Z}/q\mathbb{Z}$ if and only if there is some function $\varphi: \mathbb{Z}/q\mathbb{Z} \to \mathbb{Z}/q\mathbb{Z}$ such that $\varphi(x), x-\varphi(x) \in A$ for all $x \in \mathbb{Z}/q\mathbb{Z}$.  With this in mind, we will take
$$A:=\bigcup_{x \in \mathbb{Z}/q\mathbb{Z}} \{\varphi(x), x-\varphi(x)\}$$
for a carefully chosen function $\varphi: \mathbb{Z}/q\mathbb{Z} \to \mathbb{Z}/q\mathbb{Z}$ which forces $A-A$ to lie in a particular small subset of $\mathbb{Z}/q\mathbb{Z}$.  We will achieve this by individually analyzing the various ``types'' of differences in $A-A$, such as $\varphi(x)-(x-\varphi(x))$ and $\varphi(x)-\varphi(y)$ and $\varphi(x)-(y-\varphi(y))$.

\begin{proof}[Proof of Proposition~\ref{prop:mstd}]
Let $p_0=p_0(\varepsilon)$ be an odd prime to be determined later.  Let $T=T(\varepsilon)$ be a positive integer, depending only on $p_0$, to be determined later; let $p_1=p_1(\varepsilon), \ldots, p_T=p_T(\varepsilon)$ be distinct odd primes (also distinct from $p_0$), depending on $T,\varepsilon$, to be determined later.  Set $q:=p_0p_1 \cdots p_T$, and view the group $\mathbb{Z}/q\mathbb{Z}$ as
$$\mathbb{Z}/q\mathbb{Z}\cong \mathbb{Z}/p_0\mathbb{Z} \times \cdots \times \mathbb{Z}/p_T\mathbb{Z};$$
we accordingly coordinatize elements of $\mathbb{Z}/q\mathbb{Z}$ as $x=(x_0, \ldots, x_T)$.  Our subset $A \subseteq \mathbb{Z}/q\mathbb{Z}$ will satisfy
\begin{equation}\label{eq:diff-constrained}
A-A \subseteq \bigcup_{0 \leq j \leq T}\left( \prod_{t=0}^{j-1} (\mathbb{Z}/p_t\mathbb{Z}) \times \{0\} \times \prod_{t=j+1}^T (\mathbb{Z}/p_t\mathbb{Z}) \right),
\end{equation}
whence by a union bound
$$|A-A| \leq q(p_0^{-1}+p_1^{-1}+\cdots+p_T^{-1}).$$
The latter quantity  will be at most $\varepsilon q$ as long as $p_0$ is sufficiently large (say, at least $2\varepsilon^{-1}$) and $p_1, \ldots, p_T$ are sufficiently large (say, at least $2T\varepsilon^{-1}$).  Note here that once we are given $\varepsilon$, we first choose $p_0$, then specify $T$ depending on $p_0$, and only at the end choose $p_1, \ldots, p_T$.

We will take
$$A:=\bigcup_{x \in \mathbb{Z}/q\mathbb{Z}} \{\varphi(x), x-\varphi(x)\}$$
for a carefully chosen function $\varphi: \mathbb{Z}/q\mathbb{Z} \to \mathbb{Z}/q\mathbb{Z}$.  From $$\varphi(x)+(x-\varphi(x))=x$$ it is immediate that $2A=\mathbb{Z}/q\mathbb{Z}$.  The main work is arranging for \eqref{eq:diff-constrained} to hold.  Let us write $\varphi$ in coordinates as
$$\varphi(x)=(\varphi_0(x), \ldots, \varphi_T(x)).$$
Notice that $A-A$ consists of the elements of the form
\begin{equation}\label{eq:types-of-diff}
\begin{aligned}
E_1(x,y) &:= \varphi(x)-\varphi(y),
&\qquad
E_2(x,y) &:= (x-\varphi(x))-\varphi(y), \\
E_3(x,y) &:= \varphi(x)-(y-\varphi(y)),
&\qquad
E_4(x,y) &:= (x-\varphi(x))-(y-\varphi(y))
\end{aligned}
\end{equation}
for $x,y \in \mathbb{Z}/q\mathbb{Z}$, since we can subtract either of $\varphi(y), y-\varphi(y)$ from either of $\varphi(x), x-\varphi(x)$.

We start by specifying
$$\varphi_0(x_0. \ldots, x_T):=x_0/2$$
(with the division by $2$ taking place in $\mathbb{Z}/p_0 \mathbb{Z}$).  This guarantees that all of the expressions in \eqref{eq:types-of-diff} have vanishing $\mathbb{Z}/p_0 \mathbb{Z}$ coordinate whenever $x,y$ have the same $\mathbb{Z}/p_0 \mathbb{Z}$ coordinate.  It remains to consider the expressions in \eqref{eq:types-of-diff} for $x,y$ with $x_0 \neq y_0$.

Take $T:=4p_0(p_0-1)$ (the particular value is not very important), and let $$(a(1),b(1),i(1)), \quad \ldots, \quad (a(T),b(T),i(T))$$
be a listing of the triples $(a,b,i)$ where $a,b \in \mathbb{Z}/p_0\mathbb{Z}$ are distinct and $i \in [4]$.
For each $t \in [T]$, we define the corresponding coordinate function $\varphi_{t}$ as follows to guarantee that $E_{i(t)}(x,y)$ has $\mathbb{Z}/p_t\mathbb{Z}$ coordinate $0$ whenever $(x_0,y_0)=(a(t),b(t))$.
\begin{itemize}
    \item If $i=1$, then set
$$\varphi_t(z):=0.$$
\item If $i=2$, then set
$$\varphi_t(z):=\begin{cases}
z_t, &\text{if } z_0=a;\\
0, &\text{otherwise}.
\end{cases}$$
\item If $i=3$, then set
$$\varphi_t(z):=\begin{cases}
z_t, &\text{if } z_0=b;\\
0, &\text{otherwise}.
\end{cases}$$
\item If $i=4$, then set
$$\varphi_t(z):=z_t.$$
\end{itemize}
Thus, whenever $x_0,y_0$ are distinct and $i \in [4]$, there is some index $t \in [T]$ such that the $t$-th coordinate of $E_i(x,y)$ vanishes.  This, together with the previous paragraph, establishes \eqref{eq:diff-constrained} and completes the proof.  (We remark that in each of our four cases, the value of $\varphi_t(z)$ matters only when $z_0 \in \{a,b\}$, and it can be set arbitrarily otherwise.)
\end{proof}

\section{The general setting}\label{sec:dilates}

The goal of this section is to prove the following proposition, which immediately implies the hard direction of Theorem~\ref{thm:dilates} (as in the deduction of Corollary~\ref{cor:mstd} in the previous section).

\begin{proposition}\label{prop:dilates}
Let $\lambda_1, \ldots, \lambda_\ell, \mu_1, \ldots, \mu_m$ be nonzero integers such that $\{\mu_1, \ldots, \mu_m\}$ does not dominate $\{\lambda_1, \ldots, \lambda_\ell\}$.  Then for every $\varepsilon>0$, there are $q \in \mathbb{N}$ and $A \subseteq \mathbb{Z}/q\mathbb{Z}$ such that
$$\lambda_1 \cdot A+\cdots+\lambda_\ell \cdot A=\mathbb{Z}/q\mathbb{Z} \quad \text{and} \quad |\mu_1 \cdot A+\cdots+\mu_m \cdot A| \leq \varepsilon q.$$
\end{proposition}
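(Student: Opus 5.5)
The plan is to generalize the proof of Proposition~\ref{prop:mstd} directly. Take $q=p_0p_1\cdots p_T$ with all primes odd, exceeding every $|\lambda_j|,|\mu_h|$, and large enough that mod-$p$ reductions of the finitely many relevant integer relations among the $\lambda$'s and $\mu$'s behave as over $\mathbb{Q}$. Choose functions $\varphi_1,\dots,\varphi_\ell:\mathbb{Z}/q\mathbb{Z}\to\mathbb{Z}/q\mathbb{Z}$ with
$$\lambda_1\varphi_1(x)+\cdots+\lambda_\ell\varphi_\ell(x)=x\qquad\text{for all }x,$$
and set $A:=\bigcup_x\{\varphi_1(x),\dots,\varphi_\ell(x)\}$. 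Then $\lambda_1\cdot A+\cdots+\lambda_\ell\cdot A=\mathbb{Z}/q\mathbb{Z}$ automatically. As before, it suffices to show that every element of $\mu_1\cdot A+\cdots+\mu_m\cdot A$ has some vanishing coordinate, since the union bound then gives size at most $q\sum_t p_t^{-1}\le\varepsilon q$.

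A general element of the $\mu$-sumset is
$$E=\sum_{h=1}^m \mu_h\varphi_{j_h}(x^{(h)}),$$
determined by a choice $j:[m]\to[\ell]$ and points $x^{(1)},\dots,x^{(m)}$. Group $[m]$ into blocks $G$ according to which $h$ use the same point $x^G$. Each block carries the vector $w_G\in\mathbb{Z}^\ell$ with
$$(w_G)_j:=\sum_{h\in G,\ j_h=j}\mu_h .$$
In a coordinate $t$, I would take $\varphi_{j,t}(z)=c_j(z_0)\,z_t$, where the $c_j(\cdot)$ satisfy $\sum_j\lambda_jc_j(a)=1$ for every label $a\in\mathbb{Z}/p_0\mathbb{Z}$; this preserves the defining identity coordinatewise. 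The $t$-coordinate of $E$ is then
$$\sum_G\Big(\sum_j (w_G)_j\,c_j(x^G_0)\Big)x^G_t .$$
This vanishes identically once each block satisfies $\sum_j(w_G)_jc_j(a_G)=0$.

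For one block, the linear system $\sum_j(w_G)_jc_j=0$, $\sum_j\lambda_jc_j=1$ is solvable mod $p$ unless $w_G$ is a nonzero multiple of $\lambda$ mod $p$. By the choice of large primes, this can only happen if $w_G=r\lambda$ over $\mathbb{Q}$ with $r\neq0$. Taking $S_j=\{h\in G:j_h=j\}$, these sets are disjoint subsets of $[m]$ with $\sum_{h\in S_j}\mu_h=r\lambda_j$, which is exactly domination and is excluded by hypothesis. (The case $w_G=0$ is harmless, since the block contributes nothing.) So, as in Section~\ref{sec:mstd}, I would list all finitely many ``patterns'' — the map $j$, the block partition, and the labels of the blocks — and devote one coordinate $t$ to each pattern. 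In that coordinate, choose the $c_j(a)$ for each label $a$ to kill every block carrying that label.

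The main obstacle is that the label of a block, namely $x^G_0$ or whatever auxiliary data we use, need not be distinct across blocks. If two blocks $G,G'$ share a label $a$, the same $c(a)$ must kill both $w_G$ and $w_{G'}$. That is possible only if $\lambda\notin\operatorname{span}(w_G,w_{G'})$, which is weaker than non-domination. I would first try to separate the points: use several small-prime ``label'' coordinates, playing the role of $p_0$, and let the pattern record the labels, so that distinct points in a configuration receive distinct label tuples in some coordinate. If that fails, I would try to exploit the freedom to make $\varphi_{j,t}$ nonlinear in $z_t$ (as in the $i=2,3$ cases of the MSTD proof), so that blocks sharing a label can still be handled one at a time. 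The delicate point is keeping the order of choices: first the label primes, then $T$, then the large $p_t$. This is needed so that $\sum_tp_t^{-1}\le\varepsilon$ still holds. I expect this step to be where the real work lies.
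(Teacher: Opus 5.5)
Your one-block computation is correct and is essentially the paper's building-block step (Claim~\ref{claim:local-cancel}): the system $w_G\cdot c=0$, $\lambda\cdot c=1$ is solvable unless $w_G=r\lambda$ with $r\neq 0$, and that case is domination. But the proposal stops at the step where the proof's real content lies, and neither remedy you suggest works.

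\emph{Fixed label coordinates.} Any label coordinates chosen before the large primes leave configurations in which distinct points $x^G\neq x^{G'}$ agree on every label coordinate and differ in the working coordinate $t$. Then one coefficient vector must kill $w_G$ and $w_{G'}$ separately. This already fails for MSTD, with $\lambda=(1,1)$, $w_G=(1,0)$, $w_{G'}=(0,-1)$.

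\emph{Nonlinearity in $z_t$.} Fix the label and write $F=w_G\cdot\varphi_t$ and $F'=w_{G'}\cdot\varphi_t$ as functions of $z_t$. Killing all such configurations in coordinate $t$ needs $F(u)+F'(v)=0$ for all $u\neq v$. This forces $F$ and $F'$ to be constant, which contradicts $\varphi_{1,t}+\varphi_{2,t}=z_t$ in the MSTD example. Note that the cases $i=2,3$ in the MSTD proof are linear in $z_t$. What handles $x_0=y_0$ there is that the label coordinate $0$ is itself a killing coordinate. A single label prime cannot play that role in general: for $j_1=j_2=j$ the merged block is $(\mu_1+\mu_2)e_j$. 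If $\mu_1+\mu_2\neq 0$, killing all of these forces every coefficient to vanish, contradicting $\lambda\cdot c=1$.

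\textbf{The missing idea} is an $m$-stage construction combined with induction on the number of distinct points.
\begin{itemize}
\item At stage $k$, the label of $x$ is its full projection $x_{\le k-1}$ onto all coordinates built at stages $1,\dots,k-1$. So $T_k$ depends on the earlier primes.
\item The order of choices is therefore $T_1$, $\{p_t\}_{t\in T_1}$, $T_2$, $\{p_t\}_{t\in T_2}$, and so on, with $\sum_{t\in T_k}p_t^{-1}\le\varepsilon/m$ at each stage. This replaces your single round of label primes, then $T$, then large primes.
\item One proves: if $x^{(1)}_{\le k},\dots,x^{(m)}_{\le k}$ take at most $k$ values, then $E$ has a vanishing coordinate in $T_1\cup\cdots\cup T_k$.
\item If $x_{\le k-1}$ already takes at most $k-1$ values, this is the induction hypothesis.
\item Otherwise $x_{\le k-1}$ and $x_{\le k}$ both take exactly $k$ values. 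Then the earlier coordinates separate the $k$ blocks, and each label $a^{(j)}$ is carried by exactly one block.
\item In the coordinate $t\in T_k$ indexed by (partition, $a^{(1)},\dots,a^{(k)}$, index map), you define $\varphi_t$ on $\{x_{\le k-1}=a^{(j)}\}$ by the one-block solution for $P_j$.
\item The base case $k=1$ has trivial labels: one coordinate per index map kills the fully merged block. The case $k=m$ covers every configuration.
\end{itemize}
So the label collision you identified is not resolved by separating points in advance. It is resolved by a dichotomy: either the earlier coordinates separate the points, or the configuration has fewer distinct points at coarser resolution and was already handled at an earlier stage. Without this or some substitute, your scheme has no mechanism for configurations with shared labels, and these arise already for $m=2$.
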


We will start with a discussion of the proof of Proposition~\ref{prop:mstd} and an indication of how the argument can be generalized to the setting of Proposition~\ref{prop:dilates}.  We will then provide a detailed proof of Proposition~\ref{prop:dilates}; this is necessarily rather notation-heavy, and we hope that it is elucidated by the preceding informal discussion.  Finally, we will deduce Theorems~\ref{thm:dilates} and~\ref{thm:main}.

\subsection{Overall strategy}

The crux of the proof of Proposition~\ref{prop:mstd} was our ability to force vanishing coordinates for the quantities $E_i(x,y)$.  The expressions $E_1$ and $E_4$ are easier to handle because they are symmetric in $x$ and $y$: For instance, naively setting $\varphi_t(z):=z_t$ for all $z$ guarantees that $$E_4(x,y)=(x-\varphi(x))-(y-\varphi(y))$$ has vanishing $t$-th coordinate.  The expressions $E_2$ and $E_3$ are more challenging because they treat $x,y$ asymmetrically.  For $$E_2(x,y)=(x-\varphi(x))-\varphi(y),$$ for example, one might wish to set $\varphi_t(x):=x_t$ and $\varphi_t(y):=0$ in order to ensure that both summands in $E_2(x,y)$ have vanishing $t$-th coordinate.  The trouble is that we don't know in advance whether a given element $z \in \mathbb{Z}/q\mathbb{Z}$ will play the role of $x$ or the role of $y$ here, and we cannot set $\varphi_t(z)$ to be $z_t$ and $0$ simultaneously.

The solution in the proof is to use the $\mathbb{Z}/p_0\mathbb{Z}$ coordinate as an indexing set that witnesses this asymmetry: If we are given $(x_0,y_0)$ in advance and $x_0 \neq y_0$, then we can assign the roles of $x,y$ in $E_2(x,y)$ on the basis of the $\mathbb{Z}/p_0\mathbb{Z}$ coordinate.  More precisely, to ensure that both $x_t-\varphi_t(x)$ and $\varphi_t(y)$ vanish, we can define $\varphi_t(z)$ to be  $z_t$ whenever $z_0=x_0$, and to be $0$ whenever $z_0=y_0$; the point here is that we were essentially able to define $\varphi$ independently on $x$ and on $y$ because of our prior information about $x_0,y_0$.  The final piece of the puzzle is noticing that in the remaining case where $x_0=y_0$, we can guarantee the vanishing of the $\mathbb{Z}/p_0\mathbb{Z}$ coordinate because $$(x_0-\varphi_0(x))-\varphi_0(y)=x_0-2\varphi_0(x)$$ depends on only a single variable (and hence there are no complications from asymmetry).  One can view this argument as an inductive procedure with respect to the number of distinct variables appearing, which Ruzsa in \cite{ruzsa} calls the ``level'' of an expression.  In the setting of Proposition~\ref{prop:mstd}, all of our expressions have level at most $2$ because $A-A$ is a $2$-fold sumset.  In the general setting of Proposition~\ref{prop:dilates}, our expressions can have level up to $m$, so our induction has $m$ steps rather than $2$ steps.

The first move in the proof of Proposition~\ref{prop:dilates} is setting
\[
A := \bigcup_{x \in \mathbb{Z}/q\mathbb{Z}}
\left\{
\lambda_1^{-1}\varphi^{(1)}(x),~
\ldots,~
\lambda_{\ell}^{-1}\varphi^{(\ell)}(x)
\right\},
\]
where $\varphi^{(1)}, \ldots, \varphi^{(\ell-1)}: \mathbb{Z}/q\mathbb{Z} \to \mathbb{Z}/q\mathbb{Z}$ are functions to be specified later and we have defined
\begin{equation}\label{eq:phi-ell}
\varphi^{(\ell)}(x):=x - \varphi^{(1)}(x) - \cdots - \varphi^{(\ell-1)}(x).
\end{equation}
By design we have
\begin{align*}
x &= \lambda_1\bigl(\lambda_1^{-1}\varphi^{(1)}(x)\bigr)
  + \cdots
  + \lambda_{\ell}\bigl(\lambda_{\ell}^{-1}\varphi^{(\ell)}(x)\bigr)\in \lambda_1 \cdot A+\cdots+\lambda_\ell \cdot A
\end{align*}
for all $x \in \mathbb{Z}/q\mathbb{Z}$, whence $\lambda_1 \cdot A+\cdots+\lambda_\ell \cdot A=\mathbb{Z}/q\mathbb{Z}$.  The introduction of so many auxiliary functions $\varphi^{(i)}$ initially looks intimidating, but the ability to specify them independently will supply us with useful flexibility when we constrain $\mu_1 \cdot A+\cdots+\mu_m \cdot A$.

The elements of $\mu_1 \cdot A+\cdots+\mu_m \cdot A$ come in finitely many ``types'', analogous to the expressions $E_1, E_2, E_3, E_4$ from the proof of Proposition~\ref{prop:mstd}.  Now, our types are the expressions
$$E=E_{i_1, \ldots, i_m}(x^{(j_1)}, \ldots, x^{(j_m)}):=\mu_1 \lambda_{i_1}^{-1} \varphi^{(i_1)}(x^{(j_1)})+\cdots+\mu_m\lambda_{i_m}^{-1} \varphi^{(i_m)}(x^{(j_m)}),$$
for various choices of $i_1, \ldots, i_m \in [\ell]$ and $j_1, \ldots, j_m \in [m]$ (the variables $x^{(j)}$ correspond to $x,y$ from the proof of Proposition~\ref{prop:mstd}).

Recalling and substituting the definition~\eqref{eq:phi-ell} of $\varphi^{(\ell)}$, let us view each $E$ as a formal rational linear combination of the variables $$\varphi^{(1)}(x^{(j)}), ~\ldots, ~\varphi^{(\ell-1)}(x^{(j)}), ~x^{(j)}$$
for $j \in [m]$.  Our inductive indexing procedure will let us act as if the functions $\varphi^{(1)}, \ldots, \varphi^{(\ell-1)}$ are chosen independently for the various $j$'s.  For each $j \in [m]$, we consider the part of $E$ involving $x^{(j)}$ (either bare or in $\varphi^{(i)}(x^{(j)})$ for some $i \in [\ell-1]$). 
It is here that the non-domination hypothesis comes into play: The key point (see Claim~\ref{claim:local-cancel} below) is that the $x^{(j)}$ part of $E$ can never be merely a nonzero scalar multiple of $x^{(j)}$.  In other words, if the $x^{(j)}$ coefficient is nonzero, then there is some $i_* \in [\ell-1]$ such that the $\varphi^{(i_*)}(x^{(j)})$ coefficient is also nonzero.  Thus, by setting $\varphi^{(i_*)}_t(z)$ to be an appropriate scalar multiple of $z_t$ and setting $\varphi^{(i)}_t(z):=0$ for all $i \neq i_*$, we can guarantee that the $t$-th coordinate of $E$ vanishes.  

\subsection{Proof of Proposition~\ref{prop:dilates}}
We now carry out the strategy from the previous subsection in full detail.  The reader may wish to keep in mind the simpler setting of Proposition~\ref{prop:mstd}.  The heart of the construction is Claim~\ref{claim:local-cancel}, and the inductive scheme is captured in Claim~\ref{claim:vanishing}.

\begin{proof}[Proof of Proposition~\ref{prop:dilates}]
There is nothing to show if $m=0$, so we may assume that $m \geq 1$.  Since $\{\mu_1\}$ dominates every $1$-element set, we may also assume that $\ell \geq 2$.  In the proof, we will encounter various rationals depending only on $\lambda_1, \ldots, \lambda_\ell, \mu_1, \ldots, \mu_m$.  We will assume that all primes $p$ appearing in the proof are sufficiently large that multiplication by these rationals makes sense in $\mathbb{Z}/p\mathbb{Z}$.

We will make use of finite index sets $T_1, \ldots, T_{m}$ and primes $p_t$ for $t \in T_k$.  The primes will all be distinct and large.  We will first specify $T_1$ and choose primes $\{p_t\}_{t \in T_1}$ such that $\sum_{t \in T_1}p_t^{-1} \leq \varepsilon/m$.  Next, for each $2 \leq k \leq m$, we will inductively specify $T_k$ depending on $\{p_t\}_{T_{k'}}$ for $k'<k$ and then choose primes $\{p_t\}_{t \in T_k}$ such that $\sum_{t \in T_k}p_t^{-1} \leq \varepsilon/m$.  Finally, we will set
$$q:=\prod_{k=1}^m \prod_{t \in T_k}p_t.$$
Since all of the $p_t$'s are distinct, the Chinese Remainder Theorem gives
$$\mathbb{Z}/q\mathbb{Z} \cong \prod_{k=1}^m \prod_{t \in T_k}(\mathbb{Z}/p_t\mathbb{Z}).$$
We will arrange for $\mu_1 \cdot A+\cdots+\mu_m \cdot A$ to be contained in the subset of $\mathbb{Z}/q\mathbb{Z}$ consisting of the elements with at least one vanishing coordinate.  This will yield the desired estimate
\begin{equation}\label{eq:coord-planes}
|\mu_1 \cdot A+\cdots+\mu_m \cdot A| \leq q \sum_{k=1}^m \sum_{t \in T_k}p_t^{-1} \leq q\sum_{k=1}^m \varepsilon/m=\varepsilon q
\end{equation}
due to our choice of the $p_t$'s.

With this setup out of the way, we embark on the proof proper.  As in the proof sketch in the previous subsection, let $\varphi^{(1)}, \ldots, \varphi^{(\ell-1)}: \mathbb{Z}/q\mathbb{Z} \to \mathbb{Z}/q\mathbb{Z}$ be functions to be specified later, and set
$$\varphi^{(\ell)}(x):=x - \varphi^{(1)}(x) - \cdots - \varphi^{(\ell-1)}(x).$$
We define
\[
A := \bigcup_{x \in \mathbb{Z}/q\mathbb{Z}}
\left\{
\lambda_1^{-1}\varphi^{(1)}(x),~
\ldots,~
\lambda_{\ell}^{-1}\varphi^{(\ell)}(x)
\right\},
\]
which guarantees that $\lambda_1 \cdot A+\cdots+\lambda_\ell \cdot A=\mathbb{Z}/q\mathbb{Z}$.  It remains to show that every element of $\mu_1 \cdot A+\cdots+\mu_m \cdot A$ has at least one vanishing coordinate.  Let us coordinatize each $\varphi^{(i)}$ as
\[
\varphi^{(i)}(x) := \bigl(\varphi^{(i)}_{t}\bigr)_{t \in T_k,\, k \in [m]}.
\]

Now, $\mu_1 \cdot A+\cdots+\mu_m \cdot A$ consists of the elements of the form
$$E_{i_1, \ldots, i_m}(x^{(1)}, \ldots, x^{(m)}):=\mu_1\lambda_{i_1}^{-1} \varphi^{(i_1)}(x^{(1)})+\cdots+\mu_m\lambda_{i_m}^{-1} \varphi^{(i_m)}(x^{(m)}),$$
for various choices of $i_1, \ldots, i_m \in [\ell]$, where $x^{(1)}, \ldots, x^{(m)} \in \mathbb{Z}/q\mathbb{Z}$.  At this point the elements $x^{(1)}, \ldots, x^{(m)}$ may coincide.  Let us write $x_{\leq k}$ for the $\prod_{k'=1}^k \prod_{t \in T_{k'}}(\mathbb{Z}/p_t\mathbb{Z})$ component of an element $x \in \mathbb{Z}/q\mathbb{Z}$.  We further discriminate among the above expressions according to the number of distinct values among $$x^{(1)}_{\leq k}, \ldots, x^{(m)}_{\leq k}$$
for various choices of $1 \leq k \leq m$; this corresponds to the ``level'' of Ruzsa that we mentioned previously.  The precise statement that we require goes as follows.

\begin{claim}[inductive vanishing]\label{claim:vanishing}
There are finite index sets $T_1, \ldots, T_m$, corresponding choices of primes $p_t$, and functions $\varphi^{(1)}, \ldots, \varphi^{(\ell-1)}$ such that the following holds for all $1 \leq k \leq m$.  If $x^{(1)}, \ldots, x^{(m)} \in \mathbb{Z}/q\mathbb{Z}$ are such that there are at most $k$ distinct values among $x^{(1)}_{\leq k}, \ldots, x^{(m)}_{\leq k}$, then for each choice of $i_1, \ldots, i_m \in [\ell]$, the element $$E_{i_1, \ldots, i_m}(x^{(1)}, \ldots, x^{(m)}) \in \mathbb{Z}/q\mathbb{Z}$$ has some vanishing coordinate among the coordinates indexed by $T_1, \ldots, T_k$.
\end{claim}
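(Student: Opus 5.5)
We construct $T_k$, the primes $\{p_t\}_{t\in T_k}$, and the coordinates $\varphi^{(i)}_t$ for $t\in T_k$ by induction on $k$, mirroring the proof of Proposition~\ref{prop:mstd}. There, $T_1=\{0\}$ played the role of the level-one step, and $T_2=[T]$ the level-two step.

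\emph{Stage $k$.} Suppose $T_{k'}$ and $\{p_t\}_{t\in T_{k'}}$ are fixed for $k'<k$; then the finite group $G_{k-1}:=\prod_{k'<k}\prod_{t\in T_{k'}}\mathbb{Z}/p_t\mathbb{Z}$ is known (with $G_0$ trivial). Let $T_k$ index all pairs $(\sigma,\mathbf{i})$ consisting of $\mathbf{i}=(i_1,\ldots,i_m)\in[\ell]^m$ and a map $\sigma\colon[m]\to G_{k-1}$ whose image has at most $k$ elements; these are the possible patterns of $x^{(1)}_{\leq k-1},\ldots,x^{(m)}_{\leq k-1}$. Having chosen $T_k$, pick fresh large primes with $\sum_{t\in T_k}p_t^{-1}\le\varepsilon/m$.

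For $t=(\sigma,\mathbf{i})$, we define $\varphi^{(i)}_t(z)$ by cases on $z_{\leq k-1}$. If $z_{\leq k-1}=g$ lies in the image of $\sigma$, write $J_g:=\sigma^{-1}(g)$ and group the terms of $E_{\mathbf{i}}$ with $j\in J_g$. As $x^{(j)}_{\le k-1}$ then takes the common value $g$, these terms amount to a linear combination
\[
c_g z_t+\sum_{i<\ell} d_{g,i}\varphi^{(i)}_t(z),
\]
where $c_g=\sum_{j\in J_g,\,i_j=\ell}\mu_j\lambda_\ell^{-1}$ and $d_{g,i}=\sum_{j\in J_g,\, i_j=i}\mu_j\lambda_i^{-1}-c_g$. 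If $z_{\leq k-1}$ is not in the image of $\sigma$, set $\varphi^{(i)}_t(z)$ arbitrarily, say $0$.

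\emph{The key step.} This is where Claim~\ref{claim:local-cancel} is needed. Because the blocks $J_g$ are separated by $x_{\le k-1}$, we may choose $\varphi_t$ on each block independently. It suffices to make each block's contribution vanish identically in $z_t$. This is possible unless $c_g\ne0$ while every $d_{g,i}=0$. In that exceptional case the block contributes exactly $c_g$ times the sum of the $x^{(j)}$, $j\in J_g$, with no $\varphi$-dependence, so it cannot be killed. The non-domination hypothesis should exclude it, at least once the blocks are refined down to single distinct values of $x^{(j)}$ themselves.

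Indeed, suppose every distinct value $y$ among the $x^{(j)}$ has its block $J_y$ satisfying $d_{y,i}=0$ for all $i$. Then for each $i$ we get $\sum_{j\in J_y,\,i_j=i}\mu_j=c_y\lambda_i$. Taking any one block with $c_y\ne0$ and $r:=c_y$, the disjoint sets $S_i:=\{j\in J_y: i_j=i\}$ witness that $\{\mu\}$ dominates $\{\lambda\}$, a contradiction. So within each block either $c_g=0$, or some $d_{g,i_*}\neq0$. In the first case, set $\varphi^{(i)}_t(z)=0$ for all $i<\ell$, which kills the block. In the second, set $\varphi^{(i_*)}_t(z)=-(c_g/d_{g,i_*})z_t$ and the other $\varphi^{(i)}_t(z)=0$.

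The main obstacle is the mismatch between two notions of block. Non-domination gives a good index only for blocks defined by equal values of $x^{(j)}$, whereas stage $k$ sees only blocks defined by equal values of $x_{\le k-1}$. The $k$-step induction is built to resolve this. If the $\le k$ prefixes take at most $k$ values, then either the $\le k-1$ prefixes take at most $k-1$ values, and the inductive hypothesis supplies a vanishing coordinate in $T_1,\ldots,T_{k-1}$; or the partition into $\le k-1$ blocks already coincides with the partition into $\le k$ blocks, so nothing is lost by passing to $x_{\le k}$. At $k=m$ this covers all tuples. It remains to justify that the fine blocks, where non-domination applies, may be replaced by these coarse blocks. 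For this I would take $T_k$ to range over all patterns $\sigma$ and verify that when the partition stabilizes, some block containing a $c\ne0$ term still carries a nonzero $d$. I expect this verification to be the delicate point, and I would first test it on the case $m=2$ using Proposition~\ref{prop:mstd}.
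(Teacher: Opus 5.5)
Your construction is the paper's: $T_k$ indexes labelled coincidence patterns $\sigma$ of the $(\leq k-1)$-prefixes together with $\mathbf{i}$. On each fiber $\{z_{\leq k-1}=g\}$ you choose $\varphi_t$ as a linear function of $z_t$ that kills that block's contribution. The inductive hypothesis then reduces everything to the case where the $(\leq k-1)$- and $(\leq k)$-partitions coincide.

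However, you stop at exactly the step you call delicate, and the ``mismatch'' behind that worry does not exist. Domination is a property of the two multisets alone. It asks for a nonzero $r$ and \emph{disjoint} sets $S_1,\ldots,S_\ell\subseteq[m]$; these need not cover $[m]$ and have nothing to do with the values of the $x^{(j)}$. Your own argument (take a block $J$ with $c\neq 0$ and all $d_i=0$, put $r:=c$ and $S_i:=\{j\in J: i_j=i\}$) uses nothing about $J$ except $J\subseteq[m]$. Hence for \emph{every} $\sigma$ and every $g\in\mathrm{im}\,\sigma$, the coarse block $J_g=\sigma^{-1}(g)$ has either $c_g=0$ or some $d_{g,i_*}\neq 0$. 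This is precisely Claim~\ref{claim:local-cancel} applied with $P:=J_g$, and ``fine blocks'' never enter.

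The target you state at the end, ``some block containing a $c\neq 0$ term still carries a nonzero $d$'', also has the wrong quantifier. Different blocks have independent $t$-coordinates, so each block's contribution must vanish separately, and \emph{every} block must be good. That is exactly what the blockwise argument gives.

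The stabilization is needed for a different reason than the one you give. Your formula $c_g z_t+\sum_{i<\ell}d_{g,i}\varphi^{(i)}_t(z)$ for the contribution of $J_g$ is valid only if all $x^{(j)}$ with $j\in J_g$ share the same $t$-coordinate. Sharing the same $(\leq k-1)$-prefix is not enough: otherwise the $x^{(j)}_t$ enter as independent variables and your linear choice of $\varphi_t$ does not cancel them. In the stabilized case, $x^{(j)}_{\leq k}$ is constant on each block, so $x^{(j)}_t$ is constant on each block for every $t\in T_k$. Choosing $t=(\sigma,\mathbf{i})$ with $\sigma(j):=x^{(j)}_{\leq k-1}$, whose image has exactly $k$ elements, then makes the $t$-th coordinate of $E_{\mathbf{i}}$ vanish. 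With these two points supplied, your argument is complete and coincides with the paper's proof.
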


The proposition is a quick consequence of the $k=m$ instance of this claim, since for each choice of $x^{(1)}, \ldots, x^{(m)} \in \mathbb{Z}/q\mathbb{Z}$, there are trivially at most $m$ distinct values among $x^{(1)}_{\leq m}, \ldots, x^{(m)}_{\leq m}$. 
The claim then ensures that the element $E_{i_1, \ldots, i_m}(x^{(1)}, \ldots, x^{(m)}) \in \mathbb{Z}/q\mathbb{Z}$ has at least one vanishing coordinate for each choice of $i_1, \ldots, i_m \in [\ell]$, and the proposition follows from \eqref{eq:coord-planes}.  Before we prove the claim, we isolate the step where we use the non-domination hypothesis to produce a ``building block'' for our functions $\varphi^{(i)}$.

\begin{claim}[building block]\label{claim:local-cancel}
Let $P \subseteq [m]$, and let $i_h \in [\ell]$ for each $h \in P$.  Then there are linear functions $\psi^{(1)}, \ldots, \psi^{(\ell-1)}: \mathbb{Z} \to \mathbb{Q}$ such that, on setting
\begin{equation}\label{eq:psi-ell}
\psi^{(\ell)}(z):=z-\psi^{(1)}(z)-\cdots-\psi^{(\ell-1)}(z),
\end{equation}
we have the identity
\begin{equation}\label{eq:building-block-identity}
\sum_{h \in P} \mu_h \lambda_{i_h}^{-1} \psi^{(i_h)}(z)=0.
\end{equation}
\end{claim}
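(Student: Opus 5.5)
The identity \eqref{eq:building-block-identity} is a single linear equation in the slopes of the $\psi^{(i)}$, so I will take each $\psi^{(i)}$ to be a scalar multiple of the identity, reduce the claim to the solvability of that equation, and show that the only way it can fail is exactly the domination configuration excluded by the hypothesis of Proposition~\ref{prop:dilates}.

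\emph{Setup.} Write $\psi^{(i)}(z)=c_iz$ with $c_1,\ldots,c_{\ell-1}\in\mathbb{Q}$ to be chosen. Then \eqref{eq:psi-ell} gives $\psi^{(\ell)}(z)=c_\ell z$ with
$$c_\ell:=1-c_1-\cdots-c_{\ell-1}.$$
Group the indices of $P$ by their label: for $i\in[\ell]$ put $S_i:=\{h\in P: i_h=i\}$ and $M_i:=\sum_{h\in S_i}\mu_h$, with $M_i=0$ if $S_i=\emptyset$. The sets $S_1,\ldots,S_\ell$ are pairwise disjoint subsets of $P\subseteq[m]$. The left side of \eqref{eq:building-block-identity} becomes $z\sum_{i=1}^{\ell}M_i\lambda_i^{-1}c_i$, so it suffices to find rationals $c_1,\ldots,c_{\ell-1}$ with
$$\sum_{i=1}^{\ell-1}\bigl(M_i\lambda_i^{-1}-M_\ell\lambda_\ell^{-1}\bigr)c_i \;=\; -M_\ell\lambda_\ell^{-1}.$$

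\emph{Solvability.} This is one affine equation in the $\ell-1\geq 1$ unknowns $c_1,\ldots,c_{\ell-1}$; recall that $\ell\geq 2$ was arranged at the start of the proof of Proposition~\ref{prop:dilates}.
\begin{itemize}
\item If some coefficient $M_i\lambda_i^{-1}-M_\ell\lambda_\ell^{-1}$ with $i<\ell$ is nonzero, solve for that $c_i$ and set the other $c_{i'}$ to zero.
\item If every coefficient vanishes and $M_\ell=0$, any choice of the $c_i$ works.
\end{itemize}

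\emph{The obstruction case.} The remaining case, which I expect to be the only real content of the claim, is when every coefficient vanishes but $M_\ell\neq 0$. Then the ratios $M_i\lambda_i^{-1}$ are all equal to the common value $r:=M_\ell\lambda_\ell^{-1}$, and $r$ is a nonzero rational. This says
$$r\lambda_i=\sum_{h\in S_i}\mu_h \qquad\text{for all } i\in[\ell].$$
Since the $S_i$ are disjoint subsets of $[m]$, this is exactly the statement that $\{\mu_1,\ldots,\mu_m\}$ dominates $\{\lambda_1,\ldots,\lambda_\ell\}$, contradicting the hypothesis of Proposition~\ref{prop:dilates}.

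One point to check is empty $S_i$: there $M_i=0$ would force $r\lambda_i=0$, which is impossible because $\lambda_i\neq 0$ and $r\neq 0$. So the obstruction case automatically has every $S_i$ nonempty, and no degenerate situation is missed.

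Finally, the $c_i$ are rationals depending only on the $\lambda$'s and $\mu$'s. This matches the standing convention that all primes are large enough for these rationals to make sense modulo $p$, which is what is needed when this building block is later used coordinatewise.
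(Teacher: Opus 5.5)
Your proposal is correct and matches the paper's proof: both take each $\psi^{(i)}$ to be a scalar multiple of $z$, group the indices into the sets $S_i$, and reduce the claim to one linear equation. Both invoke non-domination only when the bare $z$-coefficient $r=M_\ell\lambda_\ell^{-1}$ is nonzero but every $\psi^{(i)}$-coefficient with $i<\ell$ vanishes; the paper merely splits the cases as $r=0$ versus $r\neq 0$ rather than by whether some coefficient is nonzero.
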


\begin{proof}[Proof of Claim~\ref{claim:local-cancel}]
For each $i \in [\ell]$, set $$S_i:=\{h \in P: i_h=i\},$$
so that (expanding the definition of $\varphi^{(\ell)}$) we can express
\begin{align*}
\sum_{h \in P} \mu_h \lambda_{i_h}^{-1} \psi^{(i_h)}(z) &=\sum_{i=1}^\ell \sum_{h \in S_i} \mu_h \lambda_i^{-1} \psi^{(i)}(z)\\
 &=\sum_{i=1}^{\ell-1} \left(\sum_{h \in S_i} \mu_h \lambda_i^{-1} -\sum_{h \in S_\ell} \mu_h \lambda_\ell^{-1} \right) \psi^{(i)}(z)+\sum_{h \in S_\ell} \mu_h \lambda_\ell^{-1}z.
\end{align*}
Define $r:=\sum_{h \in S_\ell} \mu_h \lambda_\ell^{-1}$.  If $r=0$, then we can take $\psi^{(i)}(z):=0$ for all $i \in [\ell-1]$.  Now suppose that $r \neq 0$.  The non-domination hypothesis tells us that there is some $i_* \in [\ell]$ such that
$$\sum_{h \in S_{i_*}}\mu_{h} \neq r\lambda_{i_*},$$
and the definition of $r$ guarantees that $i_* \neq \ell$.  The coefficient of $\psi^{(i_*)}(z)$ in the above expression is
$$s:=\sum_{h \in S_{i_*}} \mu_h \lambda_{i_*}^{-1} -\sum_{h \in S_\ell} \mu_h \lambda_\ell^{-1}=\sum_{h \in S_{i_*}} \mu_h \lambda_{i_*}^{-1} -r,$$
which is nonzero.  We now take
$$\psi^{(i)}(z):=\begin{cases}
-(r/s) z, &\text{if } i=i_*;\\
0, &\text{if }i \in [\ell-1] \setminus \{i_*\}.
\end{cases}$$
This yields \eqref{eq:building-block-identity} because the $\psi^{(i_*)}(z)$ terms cancel the $z$ terms, and all other terms vanish.
\end{proof}

We now prove Claim~\ref{claim:vanishing} by induction on $k$.  At the $k$-th step, we specify the data of $$T_{k}, ~\{p_t\}_{t \in T_{k}}, ~ \{\varphi^{(1)}_t, \ldots, \varphi^{(\ell-1)}_t\}_{t \in T_{k}}$$
such that the conclusion of the claim holds for the relevant value of $k$.

\begin{proof}[Proof of Claim~\ref{claim:vanishing}]

{\bf  Base case ($k=1$).}
Let $T_1$ consist of the tuples $$t=(i_1, \ldots, i_m) \in [\ell]^m,$$
and take $\{p_t\}_{t \in T_1}$ to be sufficiently large primes, as in the second paragraph of the proof of the proposition.  Fix some $t \in T_1$.  We will specify coordinate functions $$\varphi^{(1)}_{t}, \ldots, \varphi^{(\ell-1)}_{t}: \mathbb{Z}/q\mathbb{Z}\to \mathbb{Z}/p_{t}\mathbb{Z}$$ with the property that
$E_{i_1, \ldots, i_m}(x^{(1)}, \ldots, x^{(m)})$
has vanishing $\mathbb{Z}/p_{t}\mathbb{Z}$ coordinate whenever $x^{(1)}_{\leq 1}, \ldots, x^{(m)}_{\leq 1}$ are all equal (in fact, whenever $x^{(1)}_{t}, \ldots, x^{(m)}_{t}$ are all equal).

Claim~\ref{claim:local-cancel} with $P:=[m]$ provides functions $$\psi^{(1)}, \ldots, \psi^{(\ell)}: \mathbb{Z}/p_t\mathbb{Z} \to \mathbb{Z}/p_t\mathbb{Z}$$ (the rational coefficients defining these functions can be interpreted in $\mathbb{Z}/p_t\mathbb{Z}$ because we assumed that $p_t$ is sufficiently large) satisfying \eqref{eq:psi-ell} and \eqref{eq:building-block-identity}.  We now set
$$\varphi^{(i)}_t(x):=\psi^{(i)}(x_t)$$
for all $i \in [\ell-1]$ and note that the same formula extends to $i=\ell$.  Finally, we check that whenever $x^{(1)}, \ldots, x^{(m)} \in \mathbb{Z}/q\mathbb{Z}$ satisfy $x^{(1)}_t=\cdots=x^{(m)}_t=z$, the $\mathbb{Z}/p_{t}\mathbb{Z}$ coordinate of $E_{i_1, \ldots, i_m}(x^{(1)}, \ldots, x^{(m)})$ is precisely the quantity on the left-hand side of \eqref{eq:building-block-identity}, which vanishes.  This completes the base case.

{\bf Induction step ($2 \leq k \leq m$).} Suppose that we have already specified the relevant data for $k' \leq k-1$.  We now wish to specify $$T_{k}, ~\{p_t\}_{t \in T_{k}}, ~ \{\varphi^{(1)}_t, \ldots, \varphi^{(\ell-1)}_t\}_{t \in T_{k}}$$
in order to handle choices of $x^{(1)}, \ldots, x^{(m)} \in \mathbb{Z}/q\mathbb{Z}$ with at most $k$ distinct values among $x^{(1)}_{\leq k}, \ldots, x^{(m)}_{\leq k}$.  If there are at most $k-1$ distinct values among $x^{(1)}_{\leq k-1}, \ldots, x^{(m)}_{\leq k-1}$, then by induction $$E_{i_1, \ldots, i_m}(x^{(1)}, \ldots, x^{(m)})$$ already has some vanishing coordinate among the coordinates indexed by $T_1, \ldots, T_{k-1}$.  Thus it suffices to consider the case where there are exactly $k$ distinct values among $x^{(1)}_{\leq k-1}, \ldots, x^{(m)}_{\leq k-1}$ and exactly $k$ distinct values among $x^{(1)}_{\leq k}, \ldots, x^{(m)}_{\leq k}$.  In particular, the coordinates indexed by $T_1, \ldots, T_{k-1}$ record which of the $x^{(j)}_{\leq k}$'s coincide, in the sense that
\begin{equation*}\label{eq:k-level-coincide}
x^{(j)}_{\leq k}=x^{(j')}_{\leq k} \quad \text{if and only if} \quad x^{(j)}_{\leq k-1}=x^{(j')}_{\leq k-1}.
\end{equation*}
This pattern of coincidences induces a partition of $[m]$ into $k$ parts, and our plan is to apply Claim~\ref{claim:local-cancel} to each part individually.

We introduce the index set $T_k$ whose elements encode all of the possible patterns of coincidences.  More precisely, let $T_k$ consist of the tuples
$$t=(\mathcal{P}; a^{(1)}, \ldots, a^{(k)}; i_1, \ldots, i_m),$$
where
\begin{itemize}
    \item $\mathcal{P}=\{P_1, \ldots, P_k\}$ is a partition of $[m]$ into $k$ parts;\footnote{Here we let $\mathcal{P}$ range over the partitions of $[m]$ into $k$ \emph{unordered} parts, and for each $\mathcal{P}$ we list the parts $P_1, \ldots, P_k$ in a fixed order, e.g., lexicographically.}
    \item $a^{(1)}, \ldots, a^{(k)}$ are distinct elements of $\prod_{k'=1}^{k-1} \prod_{t \in T_{k'}}(\mathbb{Z}/p_t\mathbb{Z})$;
    \item $(i_1, \ldots, i_m) \in [\ell]^m$.
\end{itemize}
Notice that the finite set $T_k$ depends only on the parameters $\ell,m$ and on the previously specified primes $\{p_t\}_{t \in T_{k'}}$ for $k' \leq k-1$.  Take $\{p_t\}_{t \in T_k}$ to be sufficiently large primes, as in the second paragraph of the proof of the proposition.  Fix some $t \in T_k$.  We will specify coordinate functions $$\varphi^{(1)}_{t}, \ldots, \varphi^{(\ell-1)}_{t}: \mathbb{Z}/q\mathbb{Z}\to \mathbb{Z}/p_t\mathbb{Z}$$ with the property that
$E_{i_1, \ldots, i_m}(x^{(1)}, \ldots, x^{(m)})$
has vanishing $\mathbb{Z}/p_{t}\mathbb{Z}$ coordinate whenever
\begin{equation}\label{eq:condition-on-lifts}
x^{(h)}_{\leq k-1}=a^{(j)} \quad \text{for all } h \in P_j, \quad \text{and} \quad x^{(h)}_{\leq k}=x^{(h')}_{\leq k} \quad \text{for all } h,h' \in P_j
\end{equation}
(in fact from the latter condition we will use only that $x^{(h)}_{t}=x^{(h')}_{t}$). 

For each $j \in [k]$, Claim~\ref{claim:local-cancel} with $P:=P_j$ provides functions
$$\psi^{(j,1)}, \ldots, \psi^{(j,\ell)}: \mathbb{Z}/p_t\mathbb{Z} \to \mathbb{Z}/p_t\mathbb{Z}$$
satisfying \eqref{eq:psi-ell} and \eqref{eq:building-block-identity}.  We now set
$$\varphi_t^{(i)}(x):=\psi^{(j,i)}(x_t) \quad \text{whenever } x_{\leq k-1}=a^{(j)}$$
for all $i \in [\ell-1]$, and the same formula extends to $i=\ell$.  We finish specifying the functions $\varphi^{(i)}_t$ by setting
$$\varphi_t^{(i)}:=0 \quad \text{whenever } x_{\leq k-1} \notin \{a^{(1)}, \ldots, a^{(k)}\}.$$
Let us check that these functions satisfy the desired property.  Suppose that $x^{(1)}, \ldots, x^{(m)} \in \mathbb{Z}/q\mathbb{Z}$ satisfy \eqref{eq:condition-on-lifts}.  Then there are $z^{(1)}, \ldots, z^{(k)} \in \mathbb{Z}/p_t\mathbb{Z}$ such that $x^{(h)}_t=z^{(j)}$ whenever $h \in P_j$.  Now the $\mathbb{Z}/p_{t}\mathbb{Z}$ coordinate of $E_{i_1, \ldots, i_m}(x^{(1)}, \ldots, x^{(m)})$ is
\begin{align*}
\mu_1\lambda_{i_1}^{-1} \varphi_t^{(i_1)}(x^{(1)})+\cdots+\mu_m\lambda_{i_m}^{-1} \varphi_t^{(i_m)}(x^{(m)}) &=\sum_{j \in [k]} \sum_{h \in P_j} \mu_h \lambda_{i_h}^{-1} \psi^{(j,i_h)}(z^{(j)}).
\end{align*}
Each inner sum over $h \in P_j$ vanishes by construction due to \eqref{eq:building-block-identity}, so the entire expression also vanishes, as desired.  This completes the induction step.
\end{proof}

With Claim~\ref{claim:vanishing} established, we have completed the proof of the proposition.
\end{proof}

We remark that in the proof of Claim~\ref{claim:vanishing}, we could have started the induction at $k=0$ (where the statement is vacuously true) and interpreted the $k=1$ argument as the first instance of the induction step.  We chose to spell out the $k=1$ argument as a warm-up for the more notation-heavy material to follow.


\subsection{Deduction of the main theorems}
We are now equipped to deduce Theorems~\ref{thm:dilates} and~\ref{thm:main}.

\begin{proof}[Proof of Theorem~\ref{thm:dilates}]
We explained in the introduction that if $\{\mu_1, \ldots, \mu_m\}$ dominates $\{\lambda_1, \ldots, \lambda_\ell\}$, then the inequality
$$|\lambda_1 \cdot A+\cdots+\lambda_\ell \cdot A| \leq |\mu_1 \cdot A+\cdots+\mu_m \cdot A|$$
holds for all finite sets $A \subseteq \mathbb{Z}$.  Suppose now that $\{\mu_1, \ldots, \mu_m\}$ does not dominate $\{\lambda_1, \ldots, \lambda_\ell\}$.  Then for any $\varepsilon>0$, Proposition~\ref{prop:dilates} produces a set $B \subseteq \mathbb{Z}/q\mathbb{Z}$ (for some $q \in \mathbb{N}$) such that
$$\lambda_1 \cdot B+\cdots+\lambda_\ell \cdot B=\mathbb{Z}/q\mathbb{Z} \quad \text{and} \quad |\mu_1 \cdot B+\cdots+\mu_m \cdot B| \leq \varepsilon q.$$
Let $A \subseteq [1,q]$ be the preimage of $B$ under the canonical projection map.  Then
$$|\lambda_1 \cdot A+\cdots+\lambda_\ell \cdot A| \geq |\lambda_1 \cdot B+\cdots+\lambda_\ell \cdot B|=q$$ and $$|\mu_1 \cdot A+\cdots+\mu_m \cdot A| \leq (|\mu_1|+\cdots+|\mu_m|) \cdot |\mu_1 \cdot B+\cdots+\mu_m \cdot B| \leq \varepsilon(|\mu_1|+\cdots+|\mu_m|)q.$$
The latter is smaller than $q$ if $\varepsilon$ is chosen sufficiently small in terms of $\mu_1, \ldots, \mu_m$.
\end{proof}

As mentioned in the introduction, Theorem~\ref{thm:main} is the special case of dilates $\pm 1$.

\begin{proof}[Proof of Theorem~\ref{thm:main}]
Let $s,t,u,v$ be nonnegative integers.  The theorem is immediate if $(s,t)=(0,0)$ or $(u,v)=(0,0)$, so assume that $s+t, u+v>0$.  Let $\{\lambda_1, \ldots, \lambda_\ell\}$ consist of $s$ copies of $1$ and $t$ copies of $-1$, and let $\{\mu_1, \ldots, \mu_m\}$ consist of $u$ copies of $1$ and $v$ copies of $-1$.  Notice that if $\{\mu_1, \ldots, \mu_m\}$ dominates $\{\lambda_1, \ldots, \lambda_\ell\}$ with some scaling $r$, then the domination also occurs with scaling $r/|r| \in \{-1,1\}$.  Thus $\{\mu_1, \ldots, \mu_m\}$ dominates $\{\lambda_1, \ldots, \lambda_\ell\}$ if and only if either $s \leq u$ and $t \leq v$ (corresponding to the scaling $r=1$), or $s\leq v$ and $t \leq u$ (corresponding to the scaling $r=-1$).  The result now follows from Theorem~\ref{thm:dilates}.
\end{proof}

\section*{Acknowledgments and AI usage}

The author was supported in part by a NSF Mathematical Sciences Postdoctoral Research Fellowship under grant DMS-2501336.  We used ChatGPT for literature search and proofreading.

\end{document}